\documentclass[11pt,reqno]{amsart}
\usepackage[T1]{fontenc}
\usepackage[utf8]{inputenc}
\usepackage{lmodern}
\usepackage{amsmath,amssymb,amsthm,mathtools}
\usepackage[margin=1in]{geometry}
\usepackage[pdftex,pagebackref=true,backref=true,bookmarksopen=true]{hyperref}
\usepackage[en-US]{datetime2} \DTMsettimestyle{iso} \date{\today}

\newtheorem{theorem}{Theorem}
\newcommand{\dd}{\,\mathrm d}

\title{A sharp lower bound for torsional rigidity of convex domains}
\author{M.A.~Perelmuter}
\address{SCAD Soft Ltd., 3a Osvity Street, Kyiv, 03037, Ukraine}
\email{mikeperelmuter@gmail.com}
\keywords{torsional rigidity; convex domain; moment of inertia of domain}
\subjclass[2020]{Primary 52A40; Secondary 35J25, 49Q10}

\begin{document}
\maketitle
\begin{abstract}
For every bounded convex domain $\Omega\subset\mathbb R^d$, $d\ge2$, we establish the following sharp inequality for torsional rigidity
$$T(\Omega)\ge\frac{2(d+1)}{d}\int\limits_\Omega\operatorname{dist}(x,\partial\Omega)^2\,\mathrm{d} x,$$
where $T(\Omega)=4\int\limits_\Omega u\,\mathrm{d} x$ and $-\Delta u=1$ in $\Omega$, $u\in H^1_0(\Omega)$. Equality holds for balls.
\end{abstract}

\section{Introduction}
Let $\Omega\subset\mathbb R^d$, $d\ge2$, be a bounded domain, and let $u\in H_0^1(\Omega)$ solve
\begin{equation}
-\Delta u=1\quad\text{in }\Omega.
\end{equation}
The \emph{torsional rigidity} of $\Omega$, in the normalization used here, is
$$T(\Omega)=4\int\limits_\Omega u\dd x.$$
Equivalently, $T(\Omega)$ admits the following variational characterizations:
\begin{equation}\label{eq:variational}
T(\Omega)=4\sup_{\substack{\psi\in H_0^1(\Omega)\\\psi\ne0}}
\frac{\left(\int\limits_\Omega|\psi|\dd x\right)^2}
{\int\limits_\Omega|\nabla\psi|^2\dd x}=4\sup_{\psi\in H_0^1(\Omega)}
\left\{2\int\limits_\Omega\psi\dd x-\int\limits_\Omega|\nabla\psi|^2\dd x\right\}.
\end{equation}
For planar domains, torsional rigidity is a classical quantity in the theory of elasticity. Its estimation in terms of geometric characteristics of the domain has a long history (see the monograph of P\'olya and Szeg\H{o}~\cite{PolyaSzego}).

A natural geometric quantity in this context is the second moment of the distance to the boundary,
$$I_2(\Omega)=\int\limits_\Omega\delta(x)^2\dd x,\qquad \delta(x)=\operatorname{dist}(x,\partial\Omega).$$
For simply connected plane domains, Avkhadiev~\cite{Avkhadiev1998} established the two-sided comparison
\begin{equation}\label{eq:Avkhadiev}
I_2(\Omega)\le T(\Omega)\le64I_2(\Omega).
\end{equation}
An attractive feature of these estimates is their two-sided nature: they establish the equivalence of torsional rigidity and the geometric quantity $I_2(\Omega)$. Two-sided estimates involving the conformal radius were also obtained by Avkhadiev~\cite{Avkhadiev1998}. However, the classical conformal radius is a planar notion whose computation generally requires a conformal mapping, whereas $I_2(\Omega)$ depends only on the Euclidean distance to the boundary and is therefore more readily accessible.
For nonconvex simply connected planar domains, it is not known whether the constant $64$ in Avkhadiev's upper bound is optimal. Numerical computations by Abramov~\cite{Abramov2011} suggest that the optimal universal constant $\lambda$ in $T(\Omega)\le\lambda I_2(\Omega)$ is at least $6$.
Salakhudinov~\cite{Sal2001} subsequently improved the lower bound to
$$T(\Omega)\ge\frac32 I_2(\Omega).$$
Ba\~nuelos, van den Berg and Carroll~\cite{BanuelosVanDenBergCarroll2002} studied comparisons between torsional rigidity and $I_2$ in general domains in $\mathbb{R}^d$. In particular, they proved the lower bound $T(\Omega)\ge (2/d)I_2(\Omega)$ and obtained upper bounds under additional assumptions. For convex plane domains, the classical Makai inequality gives the sharper upper estimate $T(\Omega)\le4I_2(\Omega)$ (see~\cite{Sal2016,Sal2018} and the references therein).

In~\cite{Sal2018}, Salakhudinov formulated the following \textbf{conjecture}
\begin{equation}\label{eq:conjecture}
T(\Omega)\ge3I_2(\Omega)
\end{equation}
for simply connected plane domains. He proved it for certain special convex domains, including polygons circumscribed about a circle.

Related geometric inequalities for convex domains and their boundary-distance moments were obtained in~\cite{Sal2012,Sal2016,GafSal2021,GafSal2024}. Higher-dimensional torsional-rigidity inequalities have also been studied (see~\cite{Avkhadiev2012}).

\bigskip
The purpose of this note is to prove inequality~\eqref{eq:main} below for all bounded convex domains in $\mathbb R^d$, $d\ge2$. For $d=2$ it gives~\eqref{eq:conjecture} for all bounded convex plane domains.

The proof is based on the variational characterization of torsional rigidity, which allows one to obtain a lower bound by choosing a single suitable test function. We use a test function depending only on the distance to the boundary, reducing the problem to a one-dimensional estimate. This is combined with the concavity of the $d$-th root of the volume of inner parallel sets, which follows from the Brunn--Minkowski inequality.

\begin{theorem}\label{thm:main}
Let $\Omega\subset\mathbb R^d$, $d\ge2$, be a bounded open convex set. Then
\begin{equation}\label{eq:main}
T(\Omega)\ge \frac{2(d+1)}{d}I_2(\Omega).
\end{equation}
Equality holds for balls, so the constant is optimal.
\end{theorem}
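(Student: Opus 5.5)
The plan is to plug into the first variational characterization in \eqref{eq:variational} a test function $\psi=f(\delta)$ that depends only on the distance to the boundary, and then reduce \eqref{eq:main} to a one-dimensional inequality. That inequality will be proved using the Brunn--Minkowski concavity of inner parallel volumes.

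\textbf{Reduction to one dimension.} Let $r$ be the inradius and $A(t)=|\{x\in\Omega:\delta(x)>t\}|$ for $0\le t\le r$. Since $|\nabla\delta|=1$ a.e., the coarea formula applied to $\psi=f(\delta)$ with $f(0)=0$, $f'\ge 0$ gives
\[
\int_\Omega\psi\dd x=\int_0^r f'(t)A(t)\dd t,\qquad
\int_\Omega|\nabla\psi|^2\dd x=\int_0^r f'(t)^2\bigl(-A'(t)\bigr)\dd t,\qquad
I_2(\Omega)=\int_0^r 2tA(t)\dd t .
\]
The quotient in \eqref{eq:variational} is maximized, by Cauchy--Schwarz, at $f'=A/(-A')$. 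This yields $T(\Omega)\ge 4\int_0^r A^2/(-A')\dd t$.

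By Brunn--Minkowski, the inner parallel sets satisfy $\{\delta>t\}\supseteq(1-\lambda)\{\delta>t_0\}+\lambda\{\delta>t_1\}$ for $t=(1-\lambda)t_0+\lambda t_1$. Hence $A=\varphi^d$ with $\varphi$ concave, decreasing, and $\varphi(r)=0$. Then $A^2/(-A')=\varphi^{d+1}/(d(-\varphi'))$, and \eqref{eq:main} reduces to
\[
\int_0^r\frac{\varphi^{d+1}}{-\varphi'}\dd t\;\ge\;(d+1)\int_0^r t\,\varphi^d\dd t .
\]
For balls, $\varphi=c(r-t)$ and both sides equal $c^{d}r^{d+2}/(d+2)$, which already gives the equality case. (The corresponding test function is then exactly the ball's torsion function.)

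\textbf{The 1D inequality (main obstacle).} Naive bounds are too weak. The chord estimate $-\varphi'\le\varphi/(r-t)$ loses in the wrong direction, and a Cauchy--Schwarz/integration-by-parts argument only gives the constant $d/2$ instead of $d+1$. I would therefore change variables to $s=\varphi\in[0,\varphi_0]$, $\varphi_0=\varphi(0)$. Write $t=t(s)$, which is decreasing and concave as the inverse of a concave decreasing function. Set $\tau(s)=-t'(s)\ge0$, which is \emph{nondecreasing}, so that $t(s)=\int_s^{\varphi_0}\tau$. The inequality becomes
\[
\int_0^{\varphi_0}s^{d+1}\tau(s)^2\dd s\;\ge\;(d+1)\int_0^{\varphi_0}s^d\tau(s)\int_s^{\varphi_0}\tau(\sigma)\dd\sigma\dd s ,
\]
with equality for constant $\tau$. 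Both sides are quadratic forms in $\tau$ restricted to the cone of nondecreasing functions.

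To prove this, I would represent $\tau=\int\mathbf 1_{[a,\varphi_0]}\dd\mu(a)$ with $\mu\ge0$ (plus a constant). Expanding both sides gives a bilinear comparison in pairs of step functions $\mathbf 1_{[a,\varphi_0]}$, $\mathbf 1_{[b,\varphi_0]}$. Checking it for such pairs is an explicit polynomial inequality in $a,b,\varphi_0$; positivity of the resulting kernel then gives the general case. Alternatively, I would look for a Chebyshev-type rearrangement of the double integral $\iint_{s<\sigma}s^d\tau(s)\tau(\sigma)$. Verifying this kernel positivity, or finding the right monotonicity argument, is the step I expect to be hardest.

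Finally, routine approximation handles the regularity of $\varphi$: $\varphi$ is Lipschitz on $[0,r-\epsilon]$, and the possible degeneracy at $t=r$ (where $A$ may vanish on a lower-dimensional set) needs the usual care.
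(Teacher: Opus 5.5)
Your reduction is correct and matches the paper's. The test function $f(\delta)$, the optimal choice $f'=A/(-A')$ giving $T(\Omega)\ge4\int_0^rA^2/(-A')\dd t$, the Brunn--Minkowski concavity of $\varphi=A^{1/d}$, and the reformulation $\int_0^r\varphi^{d+1}/(-\varphi')\dd t\ge(d+1)\int_0^rt\varphi^d\dd t$ are all right. Your change of variables to a nondecreasing $\tau$ is also correct. The gap is that you stop at the decisive step. The one-dimensional inequality is never proved: you name two candidate strategies but carry out neither. As written, this reduces the theorem rather than proving it.

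The missing step is a one-line pointwise bound. In your variables, since $\tau$ is nondecreasing,
\[
(d+1)\int_0^s\sigma^d\tau(\sigma)\dd\sigma\le s^{d+1}\tau(s)\qquad(0<s<\varphi_0).
\]
Multiply by $\tau(s)\ge0$, integrate over $(0,\varphi_0)$, and apply Tonelli,
\[
\int_0^{\varphi_0}\tau(s)\int_0^s\sigma^d\tau(\sigma)\dd\sigma\dd s=\int_0^{\varphi_0}\sigma^d\tau(\sigma)\int_\sigma^{\varphi_0}\tau(s)\dd s\dd\sigma,
\]
and you get exactly your target inequality. Back in the variable $t$, this bound reads $A(t)^2/(-A'(t))\ge\frac{d+1}{d}\int_t^rA(s)\dd s$. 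That is the heart of the paper's proof. There it is obtained by bounding $\varphi$ on $[t,r]$ from above by its supporting line at $t$, integrating $(\varphi(t)-a(t)(s-t))^d$, and then integrating in $t$ using $\int_0^r\int_t^rA\dd s\dd t=\int_0^rtA\dd t$. The supporting line, not the chord, is the right comparison here, which is why your chord estimate was too weak.

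Your step-function route would also close. Let $h_a=\mathbf 1_{[a,\varphi_0]}$ and let $B$ be the polarization of your quadratic form. A direct computation gives
\[
B(h_a,h_b)=\tfrac12(\varphi_0-\max\{a,b\})(a^{d+1}+b^{d+1})\ge0,
\]
so the kernel is nonnegative. This is more work for the same conclusion.
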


\section{Proof of the theorem}
\begin{proof}
Let $\rho=\sup\limits_{x\in\Omega}\delta(x)$ and
$$\Omega_t=\{x\in\Omega:\delta(x)>t\},\quad V(t)=|\Omega_t|,\quad L(t)=-V'(t)\quad\text{a.e.}$$
Since $\Omega$ is convex, $\delta$ is concave. Hence, for $x\in\Omega_t$, $y\in\Omega_s$ and $\lambda\in[0,1]$, we have $\delta((1-\lambda)x+\lambda y)\ge(1-\lambda)\delta(x)+\lambda\delta(y)>(1-\lambda)t+\lambda s$, that is,
$\Omega_{(1-\lambda)t+\lambda s}\supseteq(1-\lambda)\Omega_t+\lambda\Omega_s$. By the Brunn--Minkowski inequality~\cite[Theorem~7.1.1]{Schneider}, the function $f(t)=V(t)^{1/d}$ is concave on $[0,\rho)$. Since $|\nabla\delta|=1$ almost everywhere, the coarea formula~\cite[Theorem~3.10]{EvansGariepy} gives $V(t)=\int\limits_t^\rho\mathcal H^{d-1}(\{\delta=s\})\,\dd s$. Thus $V$ is absolutely continuous on $[0,\rho]$ and $L(t)=\mathcal H^{d-1}(\{\delta=t\})$ for almost every $t$, where $\mathcal H^{d-1}$ denotes the $(d-1)$-dimensional Hausdorff measure. Moreover, for $0\le t<s<\rho$, the set $\Omega_t\setminus\Omega_s$ contains a nonempty open set, so $V(t)>V(s)$. Thus $f$ is strictly decreasing, and concavity gives $f'(t)<0$ wherever $f'$ exists. In particular, $L(t)>0$ almost everywhere on $(0,\rho)$.

Let $\varphi:[0,\rho]\to\mathbb R$ be a Lipschitz function with $\varphi(0)=0$. Take $\psi(x)=\varphi(\delta(x))$; it is Lipschitz and vanishes on $\partial\Omega$, so $\psi\in H^1_0(\Omega)$, and $|\nabla\psi|=|\varphi'(\delta)|$ almost everywhere. (If $N\subset[0,\rho]$ is the null set where $\varphi'$ is undefined, then $|\delta^{-1}(N)|=\int\limits_N L\dd t=0$ by the coarea formula, so $\varphi'(\delta)$ is defined almost everywhere in $\Omega$.) By~\eqref{eq:variational} and the coarea formula,
\begin{equation}\label{eq:phi}
T(\Omega)\ge4\int\limits_0^\rho\bigl(2\varphi(t)-\varphi'(t)^2\bigr)L(t)\dd t.
\end{equation}
Using $V(t)=\int\limits_t^\rho L(s)\dd s$ and integration by parts, we obtain
$$\int\limits_0^\rho2\varphi L\dd t=2\int\limits_0^\rho\varphi'V\dd t.$$
Consequently, \eqref{eq:phi} yields, for every such $\varphi$,
$$T(\Omega)\ge4\int\limits_0^\rho\bigl(2\varphi'V-\varphi'^2L\bigr)\dd t.$$
For almost every $t\in(0,\rho)$, the quadratic expression $2qV(t)-q^2L(t)$ is maximized at $q=V(t)/L(t)$. We therefore set
$\varphi_M'(t)=\min\{V(t)/L(t),M\}$, with $\varphi_M(0)=0$.\footnote{The formal maximizing choice $\varphi'=V/L$ need not be bounded. Truncation yields admissible Lipschitz test functions, while letting $M\to\infty$ recovers the maximal value of the quadratic expression.}
For each $M>0$, $\varphi_M$ is Lipschitz, so $\varphi_M(\delta)\in H^1_0(\Omega)$.
Applying the preceding bound with $\varphi=\varphi_M$ gives
$$T(\Omega)\ge4\int\limits_0^\rho\left(2\varphi_M'(t)V(t)-\varphi_M'(t)^2L(t)\right)\dd t.$$
Since $q\mapsto 2qV-q^2L$ is nonnegative and increasing on $0\le q\le V/L$, the integrand $2\varphi_M'V-(\varphi_M')^2L$ increases pointwise to $V^2/L$ as $M\to\infty$.
Monotone convergence therefore yields
\begin{equation}\label{eq:optimal}
T(\Omega)\ge4\int\limits_0^\rho\frac{V(t)^2}{L(t)}\dd t.
\end{equation}
At almost every $t\in(0,\rho)$, set $a(t)=-f'(t)>0$. By concavity,
$$0\le f(s)\le f(t)-a(t)(s-t),\qquad t\le s<\rho.$$
In particular, $\rho-t\le f(t)/a(t)$. With the substitution $z=f(t)-a(t)r$, we obtain
\begin{align*}
\int\limits_t^\rho V(s)\dd s \le \int\limits_0^{f(t)/a(t)}(f(t)-a(t)r)^d\dd r
=\frac1{a(t)}\int\limits_0^{f(t)}z^d\dd z =\frac{f(t)^{d+1}}{(d+1)a(t)}.
\end{align*}
Since $L(t)=d f(t)^{d-1}a(t)$, it follows that
$$\frac{V(t)^2}{L(t)}=\frac{f(t)^{d+1}}{d\,a(t)}\ge\frac{d+1}{d}\int\limits_t^\rho V(s)\dd s.$$

Combining the preceding inequality with~\eqref{eq:optimal} and applying Tonelli's theorem~\cite[Theorem~2.37]{Folland}, we obtain
$$\int\limits_0^\rho\left(\int\limits_t^\rho V(s)\dd s\right)\dd t=\int\limits_0^\rho sV(s)\dd s.$$
Consequently,
$$T(\Omega)\ge\frac{4(d+1)}d\int\limits_0^\rho tV(t)\dd t =\frac{2(d+1)}d\int\limits_\Omega\delta(x)^2\dd x,$$
where the last equality follows from the layer-cake representation.

Finally, let $\Omega=B_R$ be a ball of radius $R$ and $\omega_d=|B_1|$. Then $u(x)=(R^2-|x|^2)/(2d)$, so
$$T(B_R)=\frac{4\omega_dR^{d+2}}{d(d+2)},\qquad
I_2(B_R)=\int\limits_{B_R}(R-|x|)^2\dd x=\frac{2\omega_dR^{d+2}}{(d+1)(d+2)},$$
and therefore $T(B_R)=\dfrac{2(d+1)}{d}I_2(B_R)$. Thus equality holds for balls, and the constant is optimal.
\end{proof}

\bigskip
In dimension $d=2$, Theorem~\ref{thm:main} gives $T(\Omega)\ge3I_2(\Omega)$ for convex plane domains.
Inequality~\eqref{eq:main} for general (nonconvex) domains, including the simply connected planar case~\eqref{eq:conjecture}, remains an \emph{open problem}.

\section*{Acknowledgments}
AI tools were used during the preparation of this manuscript to assist with proofreading and checking mathematical calculations. The final manuscript was entirely reviewed, revised, and verified by the author, who retains sole responsibility for the final content and mathematical accuracy of this work.

\end{document}